\DeclareMathOperator{\Ad}{Ad}
\DeclareMathOperator{\ad}{ad}
\newcommand\lie[1]{\mathfrak{#1}}
\newcommand{\g}{\lie{g}}
\newcommand{\R}{\mathbb{R}}
\newcommand{\F}{\mathcal{F}}
\newcommand{\Ass}{{\rm Ass}}
\newcommand{\TAut}{{\rm TAut}}
\newcommand{\tder}{\mathfrak{tder}}
\renewcommand{\div}{{\rm div}}
\newcommand{\Tr}{\mathfrak{tr}}
\newcommand{\tr}{{\rm tr}}
\newcommand{\ch}{{\rm ch}}
\renewcommand{\k}{\mathbb{K}}
\newcommand{\Lie}{\mathfrak{lie}}
\theoremstyle{plain}
\newtheorem{theorem}{Theorem}
\newtheorem{proposition}[theorem]{Proposition}
\theoremstyle{definition}
\newtheorem{remark}{Remark}
\begin{document}

\title[]{On triviality of the Kashiwara-Vergne problem
for quadratic Lie algebras}

\author{Anton Alekseev}
\address{Section de math\'ematiques, Universit\'e de Gen\`eve,
2-4 rue du Li\`evre, c.p. 64, 1211 Gen\`eve 4, Switzerland}
\email{alekseev@math.unige.ch}

\author{Charles Torossian}
\address{Institut Math\'ematiques de Jussieu, Universit\'e Paris 7,
CNRS, Case 7012, 2 place Jussieu, 75005 Paris, France }
\email{torossian@math.jussieu.fr}

\date{July 3, 2009}

\begin{abstract}
We show that the Kashiwara-Vergne (KV) problem for quadratic Lie algebras
(that is, Lie algebras admitting an invariant scalar product) 
reduces to the problem of representing the Campbell-Hausdorff
series in the form  $\ln(e^xe^y)=x+y+[x,a(x,y)]+[y,b(x,y)]$,
where $a(x,y)$ and $b(x,y)$ are Lie series in $x$ and $y$.
This observation explains the existence of explicit rational 
solutions of the quadratic KV problem \cite{V,AMquad}
whereas constructing an explicit rational solution of the 
full KV problem would probably require the knowledge of a rational 
Drinfeld associator. It also gives, in the case of quadratic
Lie algebras, a direct proof of the Duflo theorem 
(implied by the KV problem).
\end{abstract}

\subjclass{}

\maketitle

Let $\k$ be a field of characteristic zero, and let 
$\Lie_n$ be the degree completion of the free Lie algebra
with $n$ generators. The Campbell-Hausdorff series 
$\ch(x,y)=\ln(e^xe^y)$ is an element of $\Lie_2$.
Similarly, $\ch(x_1,\dots,x_n)=\ln(e^{x_1}\dots e^{x_n})$
is an element of $\Lie_n$. For $t \in \k^*$, let 
$\ch_t(x_1,\dots,x_n)=t^{-1}\ch(tx_1,\dots,tx_n)$.
Note that $\ch_t(x_1,\dots,x_n)$ is analytic in $t$.

The universal enveloping algebra
of $\Lie_n$, $U(\Lie_n)=\Ass_n$ is the degree completion
of the free associative algebra with $n$ generators.
For an element $\alpha \in \Ass_n$
there is a unique decomposition 
$\alpha=\alpha_0+ \sum_{i=1}^n (\partial_i \alpha) x_i$
with  $\alpha_0 \in \k$ and $\partial_i \alpha  \in \Ass_n$.
Extend the adjoint action of $\Lie_n$ to an algebra homomorphism
${\rm ad}: \Ass_n \rightarrow {\rm End}(\Lie_n)$. It is easy
to see that for $\alpha \in \Lie_n$ one has
$$
\frac{d}{ds} \, \alpha(x_1, \dots, x_i+sz,\dots, x_n)|_{s=0} =
{\rm ad}(\partial_i \alpha) z.
$$

We denote by $C_n \subset \Ass_n$ the subspace spanned by
commutators ($ab-ba \in C_n$ for all $a,b \in \Ass_n$).
Let $\tau$ be the unique anti-involution on $\Ass_n$ 
defined by the property $\tau(\alpha)=-\alpha$ for all $\alpha \in \Lie_n$.
Denote by $A_n \subset \Ass_n$ the eigenspace of $\tau$ 
corresponding to the eigenvalue $(-1)$, $\alpha \in A_n$ if
$\tau(\alpha)=-\alpha$.
Define $\Tr_n = \Ass_n/C_n$  the graded vector space of 
cyclic words in $n$ letters, and denote by 
$\tr: \Ass_n \rightarrow \Tr_n$
the natural projection. In particular, we have
$\tr(\alpha\beta)=\tr(\beta\alpha)$ for all $\alpha, \beta\in \Ass_n$.
Similarly, define $\Tr_n^{quad}=
\Ass_n/\langle A_n, C_n \rangle$ and $\tr^{quad}: \Ass_n \rightarrow
\Tr_n^{quad}$ the corresponding projection. Here $\langle A_n, C_n \rangle$
is the subspace of $\Ass_n$ spanned by $A_n$ and $C_n$. 
The definition implies $\tr^{quad}(\alpha)=\tr^{quad}(\tau(\alpha))$
for all $\alpha \in \Ass_n$ and 
$\tr^{quad}(\alpha\beta)=\tr^{quad}(\beta\alpha)$ 
for all $\alpha, \beta\in \Ass_n$. Note that for all $\alpha \in \Lie_n$
and all $k$ odd we have $\tr^{quad}(\alpha^k)=\tr^{quad}(\tau(\alpha^k))=
(-1)^k\tr^{quad}(\alpha^k)=0$.

Let $\tder_n$ be the Lie algebra of derivations of $\Lie_n$
with an extra property that for $u \in \tder_n$ there exist
$a_1, \dots, a_n \in \Lie_n$ such that $u(x_i)=[x_i, a_i]$.
Elements of $\tder_n$ are called tangential derivations. 
If we assume that $a_i$ does not contain a linear term proportional
to $x_i$, the correspondence between tangential derivations
and $n$-tuples $(a_1, \dots, a_n)$ is one-to-one.
We define simplicial maps $\tder_n \rightarrow \tder_{n+1}$.
For instance, for $u=(A,B) \in \tder_2$ we introduce
$$
\begin{array}{lll}
u^{1,2} & = & (A(x,y), B(x,y), 0), \\
u^{2,3} & = & (0, A(y,z), B(y,z)), \\
u^{12,3} & = & (A(\ch(x,y), z), A(\ch(x,y),z), B(\ch(x,y),z)), \\
u^{1,23} & = & (A(x,\ch(y,z)), B(x, \ch(y,z)),B(x,\ch(y,z)) ).
\end{array}
$$

Recall (Proposition 3.6, \cite{AT}) that 
$\div(u)=\sum_{i=1}^n \tr(x_i (\partial_i a_i))$ is a 1-cocycle on
$\tder_n$ with values in $\Tr_n$. Similarly, we define
$\div^{quad}(u)=\sum_{i=1}^n \tr^{quad}(x_i (\partial_i a_i))$.
It is a 1-cocyle on $\tder_n$ with values in $\Tr_n^{quad}$.
It is easy to see that the divergence transforms in a natural way
under simplicial maps. For example, for $u \in \tder_2$ we define
$g(x,y)=\div(u) \in \Tr_2$ and we have
$$
\begin{array}{ll}
\div(u^{1,2})=g(x,y), &
\div(u^{2,3})=g(y,z), \\
\div(u^{12,3})=g(\ch(x,y),z), &
\div(u^{1,23})=g(x, \ch(y,z)).
\end{array}
$$
For example, to prove the third equation we use that 
$$
\tr(x(\partial_x a(\ch(x,y), z))+y(\partial_y a(\ch(x,y),z)) )=
\tr(\ch(x,y) (\partial_1 a)(\ch(x,y),z)).
$$
The same transformation properties under simplicial maps 
hold for $\div^{quad}$.

Let $\TAut_n$ be the subgroup of automorphisms of $\Lie_n$ with
an extra property that for each $g\in \TAut_n$ there exist
$b_1, \dots, b_n \in \Lie_n$ such that $g(x_i)=\exp(\ad_{b_i})x_i$.
The group $\TAut_n$ is isomorphic to $\tder_n$ with group
multiplication defined by the Campbell-Hausdorff series.
Simplicial maps lift to group homomorphisms $\TAut_n \rightarrow \TAut_{n+1}$.

% The cocycles $\div$ and $\div^{quad}$ integrate to unique
% (additive) group cocycles $j: \TAut_n \rightarrow \Tr_n, 
% j^{quad}: \TAut_n \rightarrow \Tr^{quad}_n$ such that 
%
% $$
% \div(u)=\, \frac{d}{ds} \, j(\exp(su))|_{s=0} 
% \hskip 0.5cm , \hskip 0.5cm 
% \div^{quad}(u)=\, \frac{d}{ds} \, j^{quad}(\exp(su))|_{s=0}  
% \hskip 0.5cm .
% $$

The Kashiwara-Vergne (KV) problem \cite{KV} can be stated in the following
way:

\vskip 0.2cm

{\bf Kashiwara-Vergne problem}: Find a pair of Lie series
in two variables $A,B \in \Lie_2$ such that
\begin{equation} \label{kv1}
(1-\exp(-\ad_x))A(x,y) + (\exp(\ad_y)-1)B(x,y)=x+y-\ch(y,x),
\end{equation}
and
\begin{equation} \label{kv2}
\tr(x (\partial_x A) + y (\partial_y B)) =
\frac{1}{2} \, \tr( f(x) + f(y) - f(\ch(x,y)) ),
\end{equation}
where $f(x)=x/(e^x-1)-1+x/2=\sum_{k=2}^\infty B_k x^k/k!$ 
is the generating series of Bernoulli numbers.

\vskip 0.2cm

Let $\g$ be a finite dimensional Lie algebra over $\k$. Then,
the positive solution of the KV problem implies
the Duflo theorem \cite{Duf} for $\g$ (an isomorphism $Z(U\g) \cong (S\g)^\g$
between the center of the universal enveloping algebra and the ring of 
invariant polynomials) and  the cohomology isomorphism
$H(\g, U\g) \cong H(\g, S\g)$ \cite{Sh, PT}. For $\k=\R$, one also 
obtains the  extension of the Duflo isomorphism to germs of 
invariant distributions \cite{ADS}, \cite{AST}.

Let $\g$ be a finite dimensional quadratic Lie algebra over $\k$.
That is, $\g$ carries  an  invariant non-degenerate symmetric
bilinear form ({\em e.g.} $\g$ is semi-simple, but not necessarily, 
see \cite{Medina}). Then, the Duflo theorem (both algebraic and 
analytic versions) as well as  the cohomology isomorphism 
$H(\g, U\g) \cong H(\g, S\g)$ follow from  a weaker version of 
the KV problem:

\vskip 0.2cm
{\bf Quadratic Kashiwara-Vergne problem}: Find a pair of Lie series
in two variables $A,B \in \Lie_2$ which verify equation
\eqref{kv1} and
\begin{equation} \label{kvquad2}
\tr^{quad}(x (\partial_x A) + y (\partial_y B)) =
\frac{1}{2} \, \tr^{quad}(f(x)+f(y)-f(\ch(x,y)) ),
\end{equation}
with $f(x)=x/(e^x-1)-1+x/2$.
\vskip 0.2cm

\begin{remark}   \label{quad}
This reduction of the second KV equation from $\Tr_2$ to $\Tr_2^{quad}$
is related to the following property of traces in the adjoint representation
of a quadratic Lie algebra $\g$. Let $\tau_\g$ be the unique involution
of $U\g$ such that $\tau_\g(\alpha)=-\alpha$ for all $\alpha\in \g$.
Then, for all $\alpha \in U\g$ we have $\tr_\g \ad(\tau(\alpha)) =
\tr_\g \ad(\alpha)$. At the level of free Lie algebras, this property
leads to replacing $\tr$ by $\tr^{quad}$.
\end{remark}

It is obvious that equation \eqref{kv1} admits many solutions.
Indeed, let $a(x,y)$ and $b(x,y)$ be Lie series given by the following
formulas
$$
a(x,y)=\, \frac{1-\exp(-\ad_x)}{\ad_x} \, A(x,y) \hskip 0.5cm , \hskip 0.5cm
b(x,y)=\, \frac{\exp(\ad_y)-1}{\ad_y} \, B(x,y).
$$
Then, equation \eqref{kv1} takes the form
$$
[x, a(x,y)]+[y, b(x,y)]= x+y-\ch(y,x).
$$
The right hand side is given by a series in Lie monomials of 
degree greater or equal to two. 
Since each Lie monomial starts either with a Lie bracket with $x$ or with 
a Lie bracket with $y$, we obtain a solution of equation
\eqref{kv1} for each explicit presentation of the Campbell-Hausdorff
formula in terms of Lie monomials ({\em e.g.} using the Dynkin formula) . 
Furthermore, one can classify solutions of the homogeneous
equation $[x,a]+[y,b]=0$ using Lemma in Section 6, \cite{Dr} (see below),
or using the technique of  \cite{Burgunder}.

The full KV problem admits a solution using the
Kontsevich deformation quantization technique \cite{AM},
and there is another solution \cite{AT} using the Drinfeld's theory of 
associators \cite{Dr}. At the same time, it is known that the
quadratic KV problem is much easier. In particular, it admits
explicit rational solutions \cite{V}, \cite{AMquad} (whereas in the general
case, it is plausible that an explicit rational solution of the KV
problem amounts to finding an explicit rational associator).
There are also two elementary proofs of the Duflo theorem
for quadratic Lie algebras: one using Clifford calculus \cite{AMduf},
and one using the Kontsevich integral in knot theory \cite{knots}. 

These simplifications in the quadratic case
are explained by the following theorem:

\begin{theorem}  \label{THM}
Every solution of equation \eqref{kv1} verifies equation
\eqref{kvquad2}.
\end{theorem}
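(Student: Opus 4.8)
The plan is to show that the left‑hand side $\delta^{quad}(A,B):=\tr^{quad}(x(\partial_x A)+y(\partial_y B))$ of \eqref{kvquad2} is the same for all solutions $(A,B)$ of \eqref{kv1} and equals the right‑hand side of \eqref{kvquad2}; since the latter depends only on $\ch$, the point is the first statement. Because $\tr^{quad}$ is the composition of $\tr$ with the projection $\Tr_2\to\Tr_2^{quad}$, any solution of the full Kashiwara–Vergne problem — which exists by \cite{AM} or \cite{AT} — satisfies \eqref{kv2}, hence \eqref{kvquad2}; and the difference of two solutions of \eqref{kv1} solves the homogeneous equation $(1-\exp(-\ad_x))A_0+(\exp(\ad_y)-1)B_0=0$. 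As $\delta^{quad}$ is additive in $(A,B)$, it therefore suffices to prove $\delta^{quad}(A_0,B_0)=0$ for every solution $(A_0,B_0)$ of the homogeneous equation.

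I would first record two elementary facts about $\tr^{quad}$. For $\alpha\in\Ass_2$ with vanishing constant term, cyclicity gives $\tr^{quad}(x(\partial_x\alpha)+y(\partial_y\alpha))=\tr^{quad}((\partial_x\alpha)x+(\partial_y\alpha)y)=\tr^{quad}(\alpha)$, while $\tr^{quad}(\alpha)=0$ for $\alpha\in\Lie_2$ since $\tau(\alpha)=-\alpha$; in particular $\delta^{quad}(A,B)=\tr^{quad}(x(\partial_x A))+\tr^{quad}(y(\partial_y B))$ can also be written $\tr^{quad}(x(\partial_x(A-B)))$. Next, set $a=\frac{1-\exp(-\ad_x)}{\ad_x}A$ and $b=\frac{\exp(\ad_y)-1}{\ad_y}B$, so that \eqref{kv1} becomes $[x,a]+[y,b]=x+y-\ch(y,x)$ (the homogeneous case being $[x,a_0]+[y,b_0]=0$), and $A=g(\ad_x)a$, $B=\bar g(\ad_y)b$ with $g(t)=\frac{t}{1-e^{-t}}=1+\frac t2+f(t)$ and $\bar g(t)=\frac{t}{e^t-1}=1-\frac t2+f(t)$ — the Bernoulli series $f$ is thus built into $g$ and $\bar g$.

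The computational engine I would use is the identity $\tr^{quad}(x(\partial_x(\phi(\ad_x)a)))=-\tr^{quad}(y\,\phi(x)(\partial_y a))$, valid for any formal power series $\phi$ and any $a\in\Lie_2$ (here $\phi(x)$ denotes the corresponding element of $\k[[x]]\subset\Ass_2$); it is proved by expanding $(\ad_x)^m a=\sum_j\binom{m}{j}(-1)^j x^{m-j}a\,x^j$, applying $\partial_x$, and using cyclicity of $\tr^{quad}$ together with $\sum_{j\geq1}\binom{m}{j}(-1)^j=-1$. Combined with its mirror under $x\leftrightarrow y$, this gives $\delta^{quad}(A,B)=-\tr^{quad}(y\,g(x)(\partial_y a))-\tr^{quad}(x\,\bar g(y)(\partial_x b))$. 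Using $\partial_y([x,a])=x(\partial_y a)$ and $\partial_x([y,b])=y(\partial_x b)$ one can then feed in the relation $[x,a]+[y,b]=x+y-\ch(y,x)$ to eliminate $\partial_y a$, $\partial_x b$ and re‑express $\delta^{quad}(A,B)$ in terms of $\ch$, $f$, and the pair $(a,b)$; in the homogeneous case the structure of the pairs $(a_0,b_0)$ is further controlled by the Lemma in Section 6 of \cite{Dr} (compare \cite{Burgunder}).

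The main obstacle is the last step: showing that the expression thus obtained vanishes in the homogeneous case (equivalently, reproduces $\frac12\tr^{quad}(f(x)+f(y)-f(\ch(x,y)))$ in general). Here I would use essentially the symmetry $\tr^{quad}\circ\tau=\tr^{quad}$: the terms that would obstruct the identity at the level of $\Tr_2$ turn out to lie in the kernel of $\Tr_2\to\Tr_2^{quad}$ — they are $\tr^{quad}$ of Lie elements, of elements of the form $w-\tau(w)$, and of odd‑degree words carried to cyclically equivalent words by reversal, all of which vanish in $\Tr_2^{quad}$. This is exactly the mechanism of Remark \ref{quad}, the vanishing of odd traces in the adjoint representation of a quadratic Lie algebra, and it is what makes \eqref{kvquad2} a consequence of \eqref{kv1} whereas \eqref{kv2} is not.
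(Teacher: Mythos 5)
Your reduction is sound as far as it goes: the left-hand side of \eqref{kvquad2} is linear in $(A,B)$, the right-hand side is independent of $(A,B)$, two solutions of \eqref{kv1} differ by a solution of the homogeneous equation, and a solution of the full KV problem (which satisfies \eqref{kv2}, hence \eqref{kvquad2} after projection) exists by \cite{AM,AT}. So the theorem would indeed follow from the single claim that $\tr^{quad}(x(\partial_x A_0)+y(\partial_y B_0))=0$ for every solution $(A_0,B_0)$ of the homogeneous equation. Your ``computational engine'' identity $\tr^{quad}(x\,\partial_x(\phi(\ad_x)a))=-\tr^{quad}(y\,\phi(x)(\partial_y a))$ is also correct. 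Two caveats, though. First, importing the existence of a full KV solution is a very heavy external input (Kontsevich quantization or associators), which runs against the point of the paper; the paper's own proof needs only the elementary existence of solutions of \eqref{kv1}, and recovers the Bernoulli series $f$ on the right-hand side by a low-order computation rather than from \cite{AM,AT}.

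The genuine gap is the step you yourself flag as ``the main obstacle'': you never actually prove that the quadratic divergence vanishes on homogeneous solutions, and the mechanism you propose for it is not the right one. Note that by Lemma 3.2 of \cite{KV} (Proposition \ref{prop:kv1}), $(A_0,B_0)$ solves the homogeneous equation exactly when the corresponding $w\in\tder_2$ satisfies $w(\ch(x,y))=0$, so your missing claim is precisely Proposition \ref{last} of the paper for $n=2$. Its proof requires two ingredients absent from your sketch: (i) Drinfeld's Lemma identifying tuples with $\sum_i[x_i,a_i]=0$ with elements of $\Tr_n^2$, whence the symmetry $\partial_i a_i=\tau(\partial_i a_i)$ and the vanishing of $\div^{quad}$ on such tuples (Proposition \ref{key}); and (ii) conjugation of $w$ by an element $F\in\TAut_2$ with $F(\ch(x,y))=x+y$, combined with the cocycle property of $\div^{quad}$, to reduce the condition $w(\ch(x,y))=0$ to the condition $\sum_i[x_i,a_i]=0$. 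Your proposed substitute --- that the obstructing terms individually ``lie in the kernel of $\Tr_2\to\Tr_2^{quad}$'' --- fails already for the simplest homogeneous solution $a_0=y$, $b_0=x$, i.e.\ $A_0=\tfrac{\ad_x}{1-e^{-\ad_x}}y$, $B_0=\tfrac{\ad_y}{e^{\ad_y}-1}x$: there the two contributions to the quadratic divergence are $-\tfrac12\tr^{quad}(xy)$ and $+\tfrac12\tr^{quad}(xy)$, each a nonzero element of $\Tr_2^{quad}$, and the vanishing comes from a cancellation between the linear coefficients of $t/(1-e^{-t})$ and $t/(e^t-1)$, not from the symmetry $\tr^{quad}\circ\tau=\tr^{quad}$ alone. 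Until you supply an argument of the strength of Propositions \ref{key} and \ref{last}, the proof is incomplete at exactly the point where the real content of the theorem sits.
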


This observation shows that the quadratic KV problem
reduces to equation \eqref{kv1}.
In particular, this explains why rational solutions of the quadratic
KV problem are easy to obtain: every rational factorization of 
the Campbell-Hausorff
series of the form $\ch(y,x)=x+y-[x,a(x,y)]-[y,b(x,y)]$ 
gives rise to such a solution.
It also explains why easy proofs of the Duflo theorem for quadratic
Lie algebras are available: in this case, the Duflo theorem follows
from the fact that the Campbell-Hausdorff series factorizes 
according to equation \eqref{kv1}.

To prepare the proof of Theorem \ref{THM}, we recall  Lemma
in Section 6, \cite{Dr}. Let $\Tr_n^2$ be the linear span of 
expressions of the form  $\tr(ab)$ for $a,b\in \Lie_n$. 
The lemma states that there is a one-to-one correspondence between 
elements $p \in \Tr_n^2$ and $n$-tuples $a_1, \dots, a_n \in \Lie_n$
satisfying $\sum_{i=1}^n [x_i, a_i]=0$. This correspondence
is given by formula  
$\frac{d}{ds} \, p(x_1, \dots, x_i+sz, \dots, x_n) |_{s=0} =
\tr(z a_i)$.

\begin{proposition}   \label{key}
Let $a_1, \dots, a_n \in \Lie_n$ such that 
$\sum_{i=1}^n [x_i, a_i]=0$, and let $u \in \tder_n$ be the tangential
derivation defined by the $n$-tuple $(a_1, \dots, a_n)$.
Then,
$$
\div^{quad}(u)=\sum_{i=1}^n\tr^{quad}(x_i(\partial_i a_i)) =0
$$  
\end{proposition}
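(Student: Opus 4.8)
The plan is to prove the stronger statement that each summand $\tr^{quad}\bigl(x_k(\partial_k a_k)\bigr)$ of $\div^{quad}(u)$ vanishes on its own; the key is to extract from the single relation $\sum_i[x_i,a_i]=0$ enough information about the behaviour of $\partial_k a_k$ under the anti-involution $\tau$.

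First I would differentiate the relation ``from the right.'' Since each $a_i\in\Lie_n$ has no constant term, $\partial_k(x_i a_i)=x_i(\partial_k a_i)$ and $\partial_k(a_i x_i)=\delta_{ki}\,a_i$, so applying $\partial_k$ to $\sum_i[x_i,a_i]=\sum_i(x_i a_i-a_i x_i)=0$ yields $a_k=\sum_i x_i(\partial_k a_i)$ for every $k$. Next I would introduce the ``left'' partial derivatives $\partial_i'$, defined by $\alpha=\alpha_0+\sum_i x_i(\partial_i'\alpha)$, and apply $\partial_k'$ to this identity; because $\partial_k'(x_i\gamma)=\delta_{ki}\gamma$ for any $\gamma$, it collapses to $\partial_k' a_k=\partial_k a_k$. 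Finally, $\tau$ interchanges the two one-sided derivatives up to sign: applying $\tau$ to $\alpha=\alpha_0+\sum_i(\partial_i\alpha)x_i$ gives $\tau(\alpha)=\alpha_0-\sum_i x_i\,\tau(\partial_i\alpha)$, whence $\partial_i'(\tau\alpha)=-\tau(\partial_i\alpha)$. Combining this with $\tau(a_k)=-a_k$ gives $\tau(\partial_k a_k)=-\partial_k'(\tau a_k)=\partial_k' a_k=\partial_k a_k$, so $\partial_k a_k$ lies in the $(+1)$-eigenspace of $\tau$.

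The conclusion is then immediate: using $\tr^{quad}(\beta)=\tr^{quad}(\tau\beta)$, cyclicity of $\tr^{quad}$, and $\tau(x_k)=-x_k$, one gets $\tr^{quad}(x_k\,\partial_k a_k)=\tr^{quad}\bigl(\tau(\partial_k a_k)\,\tau(x_k)\bigr)=-\tr^{quad}\bigl((\partial_k a_k)x_k\bigr)=-\tr^{quad}(x_k\,\partial_k a_k)$, so this term is $0$; summing over $k$ gives $\div^{quad}(u)=0$. Equivalently, $\tau$-symmetry of $\partial_k a_k$ says $x_k(\partial_k a_k)+\tau\bigl(x_k(\partial_k a_k)\bigr)=[x_k,\partial_k a_k]\in C_n$ while $x_k(\partial_k a_k)-\tau\bigl(x_k(\partial_k a_k)\bigr)\in A_n$, so $x_k(\partial_k a_k)\in\langle A_n,C_n\rangle$. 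Observe that Drinfeld's lemma is not needed for this proposition, only that the $a_i$ are genuine Lie series; the lemma will instead be used as the dictionary between such tuples and $\Tr_n^2$ when deducing Theorem \ref{THM}.

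I expect the double differentiation to be the real content: $\sum_i[x_i,a_i]=0$ is one identity, and a single differentiation from the right only yields the ``matrix'' relation $a_k=\sum_i x_i(\partial_k a_i)$; it takes a second differentiation, from the left, to force the diagonal terms $\partial_k a_k$ to be $\tau$-symmetric. The only delicate part is keeping the bookkeeping with $\partial_i$, $\partial_i'$ and $\tau$ straight; after that there is essentially no computation. One minor point to check is that $a_i$ is only determined up to a term proportional to $x_i$, and that $\div^{quad}(u)$ does not feel this ambiguity, which is immediate since $\tr^{quad}(x_i)=0$.
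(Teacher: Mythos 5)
Your proof is correct, and it reaches the paper's key intermediate fact $\partial_k a_k=\tau(\partial_k a_k)$ by a genuinely different route. The paper first invokes Drinfeld's lemma to produce a generating element $p\in\Tr_n^2$ with $\frac{d}{ds}p(x_1,\dots,x_i+sz,\dots,x_n)|_{s=0}=\tr(z\,a_i)$, and then extracts the $\tau$-symmetry of $\partial_i a_i$ from the symmetry of the mixed second derivative $\frac{\partial^2}{\partial s\,\partial t}p$ in $z_1,z_2$, using that $\ad(\beta)$ and $\ad(\tau\beta)$ are adjoint under the trace pairing. You instead stay entirely inside $\Ass_n$: differentiating $\sum_i(x_ia_i-a_ix_i)=0$ from the right gives $a_k=\sum_i x_i(\partial_k a_i)$, differentiating that from the left gives $\partial_k'a_k=\partial_k a_k$, and the sign rule $\partial_i'(\tau\alpha)=-\tau(\partial_i\alpha)$ together with $\tau(a_k)=-a_k$ yields the same symmetry; all the individual steps (uniqueness of the two one-sided decompositions, $\partial_k(x_i a_i)=x_i(\partial_k a_i)$ because $a_i$ has no constant term, etc.) check out. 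The final termwise vanishing of $\tr^{quad}(x_k(\partial_k a_k))$ is then identical in both arguments. What your version buys is self-containedness and slightly more information: it makes no use of the correspondence with $\Tr_n^2$ (in particular not of the surjectivity direction of Drinfeld's lemma), and it shows directly that each summand vanishes separately. What the paper's version buys is brevity given the lemma, and a conceptual explanation: the $\tau$-symmetry of $\partial_i a_i$ is literally the symmetry of the Hessian of the potential $p$. Your closing observations (that $x_k(\partial_k a_k)\in\langle A_n,C_n\rangle$, and that the ambiguity $a_i\mapsto a_i+cx_i$ is invisible to $\div^{quad}$ since $\tr^{quad}(x_i)=0$) are also correct.
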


\begin{proof}
Let $p \in \Tr_2^2$ be an element generating the derivation $u$.
Consider
$$
\begin{array}{lll}
\frac{\partial^2}{\partial s \partial t} \,
p(x_1, \dots, x_i+sz_1+tz_2, \dots x_n)|_{s=t=0} & = &
\frac{\partial}{\partial t} \, 
\tr(z_1 a_i(x_1,\dots,x_i+tz_2,\dots,x_n))|_{t=0} \\
& = & \tr(z_1 ({\rm ad}(\partial_i a) z_2)) \\
& = & \tr(({\rm ad}(\tau(\partial_i a)) z_1) z_2) .
\end{array}
$$
Since the left hand side is symmetric in $z_1, z_2$, we conclude
that $\partial_i a_i = \tau(\partial_i a_i)$. Then,
$$
\begin{array}{lll}
\div^{quad}(u) & = & \sum_{i=1}^n \tr^{quad}(x_i (\partial_i a_i)) \\
& = & \sum_{i=1}^n \tr^{quad}(\tau(x_i (\partial_i a_i))) \\
& = & - \sum_{i=1}^n \tr^{quad} ((\partial_i a_i) x_i) \\
& = & - \div^{quad}(u),
\end{array}
$$
and $\div^{quad}(u)=0$, as required.
\end{proof}

\begin{remark}

Here we presented an algebraic proof of Proposition \ref{key}
suggested to us by Michele Vergne.
One can also give a proof using graphical calculus as in Section 5.1,
\cite{SW}.
\end{remark}

The next proposition summarizes known properties of equation
\eqref{kv1}.

\begin{proposition} \label{prop:kv1}
The following three statements are equivalent:
\begin{itemize}
\item
$A,B \in \Lie_2$ is a solution of equation \eqref{kv1}.

\item 
For all $t \in \k^*$, the tangential derivation $u_t \in \tder_2$
defined by formula $u_t(x)=t^{-1}[x,A(tx,ty)], u_t(y)=t^{-1}[y,B(tx,ty)]$
verifies  equation
$$
u_t(\ch_t(x,y))= \frac{d}{dt} \, \ch_t(x,y).
$$

\item
The solution $F_t \in \TAut_2$ of the differential equation 
$$
F_t^{-1} \, \frac{dF_t}{dt} \, =u_t
$$
with initial condition $F_0=1$ verifies equation $F_t(\ch_t(x,y))=x+y$.
\end{itemize}
\end{proposition}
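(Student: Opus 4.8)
The plan is to prove the chain of equivalences $(1)\Leftrightarrow(2)\Leftrightarrow(3)$ among the three statements, treating $(2)\Leftrightarrow(3)$ by a flow argument and $(1)\Leftrightarrow(2)$ by an explicit computation with the Campbell--Hausdorff series.

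For $(2)\Leftrightarrow(3)$ I would first observe that the equation $F_t^{-1}\frac{dF_t}{dt}=u_t$ with $F_0=1$ has a unique solution $F_t\in\TAut_2$: working degree by degree in the natural grading it becomes a triangular system solved by iterated integration, and $u_t$ is polynomial in $t$ in each degree since $u_t(x)=t^{-1}[x,A(tx,ty)]$ with $A\in\Lie_2$ having no constant term. Since $\ch_0(x,y)=x+y$ and $F_0=1$, the identity $F_t(\ch_t(x,y))=x+y$ holds at $t=0$, hence it holds for all $t$ if and only if $\frac{d}{dt}\big(F_t(\ch_t(x,y))\big)\equiv 0$. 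Differentiating by the product rule and substituting the defining equation for $\frac{dF_t}{dt}$, this derivative equals $F_t$ applied to $u_t(\ch_t)-\frac{d}{dt}\ch_t$ (with the sign determined by the convention for the $\TAut_2$-action on $\Lie_2$); since $F_t$ is invertible it vanishes identically precisely when $(2)$ holds.

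For $(1)\Leftrightarrow(2)$, set $W=W(t)=\ch(tx,ty)=t\,\ch_t(x,y)$, so that $e^{W}=e^{tx}e^{ty}$. Differentiating this relation in $t$ and using $e^{\ad_W}=e^{t\ad_x}e^{t\ad_y}$ gives
\[
\frac{e^{\ad_W}-1}{\ad_W}\Big(\frac{dW}{dt}\Big)=x+e^{t\ad_x}y .
\]
Applying instead the derivation $u_t$ to $e^{W}=e^{tx}e^{ty}$, and passing from $A,B$ to $a,b$ via $[x,a]=(1-e^{-\ad_x})A$, $[y,b]=(e^{\ad_y}-1)B$ as recalled before the statement, I expect to obtain
\[
\frac{e^{\ad_W}-1}{\ad_W}\big(u_t(W)\big)=\tfrac1t\,e^{t\ad_x}\Big([tx,a(tx,ty)]+[ty,b(tx,ty)]\Big).
\]
Now \eqref{kv1} is precisely $[x,a]+[y,b]=x+y-\ch(y,x)$; substituting $x\mapsto tx$, $y\mapsto ty$ and using the identity $e^{t\ad_x}\ch(ty,tx)=\ch(tx,ty)=W$ (which explains why the right-hand side of \eqref{kv1} carries $\ch(y,x)$ and not $\ch(x,y)$), the last display becomes $\frac{e^{\ad_W}-1}{\ad_W}(u_t(W))=x+e^{t\ad_x}y-\ch_t(x,y)$. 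Comparing with the first display, using $[W,\ch_t]=0$ --- so that $\frac{e^{\ad_W}-1}{\ad_W}\ch_t=\ch_t$ --- and the invertibility of $\frac{e^{\ad_W}-1}{\ad_W}$, one gets $u_t(W)=\frac{dW}{dt}-\ch_t$, which after substituting $W=t\,\ch_t$ is exactly $(2)$. Each step is reversible, so running the computation backwards (multiply by $t$, apply $e^{-t\ad_x}$, and read off each graded component at one, equivalently every, $t\neq0$) recovers \eqref{kv1} from $(2)$.

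I expect the main obstacle to be the bookkeeping in $(1)\Leftrightarrow(2)$: correctly evaluating $u_t$ on the group-like element $e^{tx}e^{ty}$, keeping straight the change of variables between $(A,B)$ and $(a,b)$, and invoking the two conjugation identities $e^{\ad_W}=e^{t\ad_x}e^{t\ad_y}$ and $e^{t\ad_x}\ch(ty,tx)=W$. Pinning down the sign conventions in $(2)\Leftrightarrow(3)$ (left versus right action of $\TAut_2$, and the correspondingly placed logarithmic derivative) also needs some care but is otherwise routine.
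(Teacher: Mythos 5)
Your argument is correct and is in substance the same as the paper's, which simply cites Lemma~3.2 of Kashiwara--Vergne for $(1)\Leftrightarrow(2)$ and declares $(2)\Leftrightarrow(3)$ obvious: differentiating $e^{W}=e^{tx}e^{ty}$ in $t$, applying the derivation $u_t$ to the same identity, and comparing the two via $e^{t\ad_x}\ch(ty,tx)=W$ and the invertibility of $(e^{\ad_W}-1)/\ad_W$ is exactly the classical computation behind that citation, and your flow argument for $(2)\Leftrightarrow(3)$ is the intended one. The one place where you wave your hands is the sign in $(2)\Leftrightarrow(3)$, and it deserves a closer look. With composition read as it is elsewhere in the paper (e.g.\ $\Ad_{\F}u=\F\circ u\circ \F^{-1}$ in the proof of Proposition~\ref{last}), the equation $F_t^{-1}\frac{dF_t}{dt}=u_t$ means $\frac{dF_t}{dt}=F_t\circ u_t$ and hence $\frac{d}{dt}\,F_t(\ch_t)=F_t\bigl(u_t(\ch_t)+\frac{d}{dt}\ch_t\bigr)$, with a plus sign rather than the minus sign you assert. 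A degree-two check confirms this: writing $A=\beta y+\cdots$ and $B=\gamma x+\cdots$, equation \eqref{kv1} forces $\beta-\gamma=\tfrac12$, and then $F_t(\ch_t(x,y))=x+y+t(\beta-\gamma+\tfrac12)[x,y]+\cdots=x+y+t[x,y]+\cdots$, whereas the ODE $F_t^{-1}\frac{dF_t}{dt}=-u_t$ yields $x+y+O(t^2)$ as desired. So the sign you need is real and does not simply ``follow from the convention''; it indicates that item (3) should be read with $-u_t$ on the right-hand side of the ODE (equivalently, with $F_t$ replaced by $F_t^{-1}$). This is a discrepancy in the statement rather than a gap in your reasoning, but it is precisely the kind of thing your proof should pin down explicitly instead of deferring to an unspecified convention.
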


\begin{proof}
For equivalence of the first and second statements, see
Lemma 3.2, \cite{KV}. Equivalence of the second and  third
statements is obvious, see also Theorem 5.2, \cite{AT}.
\end{proof}

Note that given $F \in \TAut_2$ verifying $F(\ch(x,y))=x+y$ (as in
Proposition \ref{prop:kv1}), one can construct $\F \in \TAut_n$
by formula $\F=F^{1,2}F^{12,3} \dots F^{1\dots (n-1),n}$ such that
$$
\begin{array}{lll}
\F (\ch(x_1,\dots,x_n)) & = &
F^{1,2}F^{12,3} \dots F^{1\dots (n-1),n}(x_1+\dots+x_n) \\
& = & F^{1,2}F^{12,3} \dots F^{1\dots (n-2),n-1} (\ch(x_1,\dots,x_{n-1})+x_n) \\
& = & \dots \\
& = & F^{1,2} (\ch(x_1,x_2)+x_3+ \dots +x_n) \\
& = & x_1 + \dots + x_n.
\end{array}
$$

\begin{proposition} \label{last}
Let $u \in \tder_n$ such that $u(\ch(x_1,\dots,x_n))=0$. Then, $\div^{quad}(u)=0$.
\end{proposition}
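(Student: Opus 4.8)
The plan is to reduce the statement to Proposition \ref{key} by exhibiting, from the condition $u(\ch(x_1,\dots,x_n))=0$, an $n$-tuple of Lie series whose components satisfy the hypothesis $\sum_i [x_i, a_i'] = 0$ and whose associated tangential derivation has the same divergence as $u$. Write $u = (a_1,\dots,a_n)$ with $a_i \in \Lie_n$, so that $u(x_i) = [x_i, a_i]$. The first step is to linearize the condition: expanding $u(\ch(x_1,\dots,x_n)) = \sum_i (\partial_i \ch)\,$-type contributions using the formula $\frac{d}{ds}\alpha(\dots,x_i+sz,\dots)|_{s=0} = \ad(\partial_i\alpha)z$ together with $u(x_i)=[x_i,a_i]$, the equation $u(\ch)=0$ becomes an identity relating the $a_i$ to the structure of the Campbell-Hausdorff series.

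The cleaner route, and the one I would actually pursue, uses Proposition \ref{prop:kv1} and the simplicial construction right after it. Given any solution $A,B$ of \eqref{kv1}, one gets $F \in \TAut_2$ with $F(\ch(x,y)) = x+y$, hence $\F = F^{1,2}F^{12,3}\cdots F^{1\dots(n-1),n} \in \TAut_n$ with $\F(\ch(x_1,\dots,x_n)) = x_1+\dots+x_n$. Now if $u(\ch(x_1,\dots,x_n))=0$, then the conjugated derivation $v = \F \circ u \circ \F^{-1}$ (more precisely, the tangential derivation obtained by transporting $u$ by $\F$) annihilates $x_1+\dots+x_n$, i.e. $v$ lies in the stabilizer of $x_1+\dots+x_n$. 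Writing $v = (c_1,\dots,c_n)$, the condition $v(x_1+\dots+x_n) = \sum_i [x_i,c_i] = 0$ is exactly the hypothesis of Proposition \ref{key}, so $\div^{quad}(v) = 0$. Since $\div^{quad}$ is a $1$-cocycle on $\tder_n$ (with values in $\Tr_n^{quad}$) and $\F \in \TAut_n$ acts on $\Tr_n^{quad}$, the cocycle property gives $\div^{quad}(u) = \div^{quad}(v) + (\text{coboundary term from }\F)$; one then needs the transported cocycle to still vanish.

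The main obstacle is precisely controlling this coboundary term: the cocycle relation for the conjugation action reads $\div^{quad}(\F u \F^{-1}) = \F\cdot\div^{quad}(u) + \div^{quad}(\F)$-type expressions, so $\div^{quad}(u)=0$ does not follow from $\div^{quad}(v)=0$ unless one is careful about how $\F$ acts and whether the action on $\Tr_n^{quad}$ is trivial on the relevant class, or unless one computes directly. I expect the honest argument avoids conjugation and instead works with $u$ directly: differentiate the identity $u(\ch_t(x_1,\dots,x_n))=0$ (rescaled version) and compare with Proposition \ref{prop:kv1}, or better, observe that $u(\ch)=0$ means $u$ is a ``flat'' family and express $u$ as a $\k$-linear combination (or limit) of commutators of derivations of the form appearing in \eqref{kv1}, reducing to Proposition \ref{key} on each piece by the cocycle property applied to genuine coboundaries (where the correction term is manifestly a coboundary in $\Tr_n^{quad}$, hence still in the relevant subspace). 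The cleanest formulation: the condition $u(\ch(x_1,\dots,x_n))=0$ forces $u$, after the change of variables trivializing $\ch$, to be a stabilizer element of $x_1+\dots+x_n$; invoke Proposition \ref{key} there, and check that $\div^{quad}$ is invariant under the relevant $\TAut_n$-action on $\Tr_n^{quad}$ because that action is generated by inner automorphisms, under which cyclic-word traces (and their quadratic quotient) are unchanged.
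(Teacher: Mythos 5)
Your second paragraph is exactly the paper's proof: set $v=\Ad_{\F}u$, observe that $v(x_1+\dots+x_n)=\F(u(\F^{-1}(x_1+\dots+x_n)))=\F(u(\ch(x_1,\dots,x_n)))=0$, apply Proposition \ref{key} to conclude $\div^{quad}(v)=0$, and transport back. So you have found the intended route; the only place your write-up stops short is the step you yourself flag as ``the main obstacle.'' The paper disposes of it in one line, writing $\div^{quad}(u)=\div^{quad}(\Ad_{\F^{-1}}(v))=\F^{-1}\cdot\div^{quad}(v)=0$ and citing the cocycle property of $\div^{quad}$: once $\div^{quad}(v)=0$, applying $\F^{-1}$ to it still gives $0$. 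Your instinct that a $1$-cocycle need not be equivariant under conjugation is correct in general --- the group-level identity has the shape $\div^{quad}(\Ad_g w)=g\cdot\div^{quad}(w)-(\Ad_g w)\cdot J(g)$, where $J$ is the group cocycle on $\TAut_n$ integrating $\div^{quad}$ --- so a fully spelled-out argument must either justify the equivariance used by the paper or show that the correction term $u\cdot J(\F^{-1})$ vanishes in this situation. That is where the remaining content lies, and your proposal does not supply it.

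Two of your fallback suggestions would not work as stated. First, the action of $\TAut_n$ on $\Tr_n^{quad}$ is not ``generated by inner automorphisms'': a tangential automorphism conjugates each generator $x_i$ by a different element $\exp(b_i)$, so it is not inner, and $\div^{quad}$ is not $\TAut_n$-invariant (exact invariance would, upon differentiating, contradict the inhomogeneous cocycle identity $\div^{quad}([u,v])=u\cdot\div^{quad}(v)-v\cdot\div^{quad}(u)$). Second, the plan in your first paragraph of linearizing $u(\ch(x_1,\dots,x_n))=0$ directly does not put you in the hypotheses of Proposition \ref{key}, which requires $\sum_i[x_i,a_i]=0$ on the nose rather than modulo the higher-order terms of $\ch$; the conjugation by $\F$ is precisely the device that converts the one condition into the other, so it cannot be avoided --- it can only be supplemented by the missing bookkeeping on the divergence.
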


\begin{proof}
Define $v = \Ad_{\F} u \in \tder_n$.
We have 
$$
v(x_1+\dots+x_n)=\F(u(\F^{-1}(x_1+\dots+x_n)))=
\F(u(\ch(x_1,\dots,x_n)))=0.
$$
Then,
$$
\div^{quad}(u)=\div^{quad}(\Ad_{\F^{-1}}(v))=\F^{-1}\cdot \div^{quad}(v)=0,
$$
where we used Proposition \ref{key} and 
the cocycle property of $\div^{quad}$.
\end{proof}

\begin{proposition}  \label{U}
Let $A,B \in \Lie_2$ be a solution of equation \eqref{kv1}, and $u\in \tder_2$
be the corresponding tangential derivation. Then, 
\begin{equation} \label{eq:U}
U=u^{1,2}+u^{12,3}-u^{1,23}-u^{2,3}
\end{equation}
verifies $U(\ch(x,y,z))=0$.
\end{proposition}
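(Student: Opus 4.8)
The plan is to evaluate $U$ on $\ch(x,y,z)$ one summand at a time. Since the pair $A,B$ solves \eqref{kv1}, the equivalence of the first two statements of Proposition~\ref{prop:kv1} applies; taking $t=1$ (so that $u_1=u$ and $\ch_1=\ch$) it gives the identity $u(\ch(x,y))=\frac{d}{ds}\big|_{s=1}\ch_s(x,y)$ in $\Lie_2$, which we may post-compose with any Lie algebra homomorphism out of $\Lie_2$. Besides this, the only facts used are the associativity of the Campbell--Hausdorff series, $\ch(x,y,z)=\ch(\ch(x,y),z)=\ch(x,\ch(y,z))$, and the corresponding rescaled identities $\ch_s(\ch_s(x,y),z)=\ch_s(x,y,z)=\ch_s(x,\ch_s(y,z))$ (which follow at once from $s\,\ch_s(\cdot,\cdot)=\ch(s\cdot,s\cdot)$).

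First I would describe how the four simplicial faces of $u$ act on the relevant two-generator subalgebras of $\Lie_3$. From the defining formulas, $u^{1,2}$ acts on the subalgebra generated by $x,y$ exactly as $u$ and annihilates $z$, while $u^{2,3}$ acts on the subalgebra generated by $y,z$ as $u$ and annihilates $x$. For $u^{12,3}$: since it sends both $x$ and $y$ to their bracket with $A(\ch(x,y),z)$, it acts on the subalgebra generated by $x,y$ as $-\ad_{A(\ch(x,y),z)}$, hence $u^{12,3}(\ch(x,y))=[\ch(x,y),A(\ch(x,y),z)]$ and $u^{12,3}(z)=[z,B(\ch(x,y),z)]$. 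Writing $\phi\colon\Lie_2\to\Lie_3$ for the homomorphism taking the two generators to $\ch(x,y)$ and $z$, this says precisely that $u^{12,3}\circ\phi=\phi\circ u$ (both sides are $\phi$-derivations agreeing on the generators of $\Lie_2$, hence equal). Symmetrically $u^{1,23}\circ\psi=\psi\circ u$, where $\psi\colon\Lie_2\to\Lie_3$ takes the generators to $x$ and $\ch(y,z)$.

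Now for the computation. Using $\ch(x,y,z)=\ch(\ch(x,y),z)$ and the chain rule for derivations, $u^{1,2}$ acts only on the inner argument, sending it to $u(\ch(x,y))=\frac{d}{ds}\big|_{s=1}\ch_s(x,y)$, so $u^{1,2}(\ch(x,y,z))=\frac{d}{ds}\big|_{s=1}\ch(\ch_s(x,y),z)$; and by $u^{12,3}\circ\phi=\phi\circ u$ together with the $t=1$ identity, $u^{12,3}(\ch(x,y,z))=\frac{d}{ds}\big|_{s=1}\ch_s(\ch(x,y),z)$, which deforms the outer copy of $\ch$. The sum of these two partial $s$-derivatives is the total $s$-derivative of $\ch_s(\ch_s(x,y),z)=\ch_s(x,y,z)$, so $u^{1,2}(\ch(x,y,z))+u^{12,3}(\ch(x,y,z))=\frac{d}{ds}\big|_{s=1}\ch_s(x,y,z)$. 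The same computation starting from $\ch(x,y,z)=\ch(x,\ch(y,z))$ gives $u^{2,3}(\ch(x,y,z))+u^{1,23}(\ch(x,y,z))=\frac{d}{ds}\big|_{s=1}\ch_s(x,\ch_s(y,z))=\frac{d}{ds}\big|_{s=1}\ch_s(x,y,z)$. Subtracting the two identities and using $U=u^{1,2}+u^{12,3}-u^{1,23}-u^{2,3}$ yields $U(\ch(x,y,z))=0$.

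The individual calculations are short; I expect the only delicate point to be the bookkeeping in the last paragraph, namely checking that the single $s$-derivative provided by Proposition~\ref{prop:kv1} sits in the correct slot of $\ch(\cdot,\cdot)$ in each term, so that $u^{1,2}$ and $u^{12,3}$ contribute respectively the deformation of the inner and of the outer argument and then assemble into the derivative of the ``diagonal'' $\ch_s(\ch_s(x,y),z)$, which collapses to $\ch_s(x,y,z)$. One could instead try to read the statement off the product $\F=F^{1,2}F^{12,3}$ of Proposition~\ref{prop:kv1}(3), using that both $F^{1,2}F^{12,3}$ and $F^{2,3}F^{1,23}$ send $\ch(x,y,z)$ to $x+y+z$; but converting that group-level equality into a statement about the \emph{sum} $U$ in $\tder_3$ is more roundabout than the derivation-level argument above.
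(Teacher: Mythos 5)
Your argument is correct and is essentially the paper's own proof: the paper likewise writes $\ch_t(x,y,z)=\ch_t(\ch_t(x,y),z)=\ch_t(x,\ch_t(y,z))$, splits $\frac{d}{dt}\big|_{t=1}$ into the two partial derivatives, identifies the outer one with $u^{12,3}$ and the inner one with $u^{1,2}$ (respectively $u^{1,23}$ and $u^{2,3}$), and subtracts. Your extra bookkeeping via the homomorphisms $\phi,\psi$ just makes explicit the identifications the paper leaves implicit.
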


\begin{proof}
Consider $\ch_t(x,y,z)=\ch_t(\ch_t(x,y),z)$. We have,
$$
\begin{array}{lll}
\frac{d}{dt} \, \ch_t(x,y,z)|_{t=1} & = &
(\frac{d}{dp}\, \ch_p(\ch_q(x,y),z) + \frac{d}{dq}\, \ch_p(\ch_q(x,y),z))_{p=q=1} \\
& = & u^{12,3}(\ch(x,y,z)) + u^{1,2}(\ch(x,y,z)).
\end{array}
$$
Similarly, we obtain
$$
\begin{array}{lll}
\frac{d}{dt} \, \ch_t(x,y,z)|_{t=1} & = &
(\frac{d}{dp}\, \ch_p(x,\ch_q(y,z)) + \frac{d}{dq}\, \ch_p(x,\ch_q(y,z)))_{p=q=1} \\
& = & u^{1,23}(\ch(x,y,z)) + u^{2,3}(\ch(x,y,z)).
\end{array}
$$
By combining these two equations we arrive at $U(\ch(x,y,z))=0$.
\end{proof}

\begin{proposition}
Let $A,B \in \Lie_2$ be a solution of equation \eqref{kv1}, $u\in \tder_2$
be the corresponding tangential derivation, and $g=\div^{quad}(u) \in \tr_2^{quad}$.
Then, there is $h \in \tr_1^{quad}$ such that
$g(x,y)=h(x)+h(y)-h(\ch(x,y))$.
\end{proposition}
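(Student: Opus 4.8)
The plan is to apply $\div^{quad}$ to the identity of Proposition~\ref{U}, namely $U=u^{1,2}+u^{12,3}-u^{1,23}-u^{2,3}$, which satisfies $U(\ch(x,y,z))=0$. By Proposition~\ref{last}, we get $\div^{quad}(U)=0$. On the other hand, $\div^{quad}$ is a $1$-cocycle on $\tder_3$, so it is additive on a sum of commuting contributions; more precisely, since the four terms $u^{1,2}, u^{12,3}, u^{1,23}, u^{2,3}$ all arise from the single derivation $u$ via simplicial maps, one should check that $\div^{quad}$ of the combination $U$ equals the corresponding combination of $\div^{quad}(u)$ evaluated in the appropriate variables. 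Using the transformation properties of $\div^{quad}$ under simplicial maps (which mirror those of $\div$ recalled in the excerpt), with $g=\div^{quad}(u)\in \Tr_2^{quad}$ we obtain
\begin{equation*}
0=\div^{quad}(U)=g(x,y)+g(\ch(x,y),z)-g(x,\ch(y,z))-g(y,z).
\end{equation*}

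The second step is to recognize this as a cocycle/coboundary statement for the "Campbell-Hausdorff cosimplicial complex" in the sense of the simplicial face maps $\Tr_n^{quad}\to \Tr_{n+1}^{quad}$. The equation above says precisely that $g$ is a $2$-cocycle (the alternating sum of the four cosimplicial faces of $g$ vanishes), and we want to exhibit it as the coboundary of some $h\in \Tr_1^{quad}$, i.e. $g(x,y)=h(x)+h(y)-h(\ch(x,y))$. The standard way to do this is a degree-by-degree argument: working in each graded component, one solves for $h$ recursively. In low degree (degree $2$) one checks directly that the degree-$2$ part of $g$ has the required form; then, assuming $h$ has been constructed up to degree $n-1$ so that $g-\delta h$ vanishes through degree $n-1$, the cocycle equation forces the degree-$n$ part of $g-\delta h$ to be a cocycle that is "primitive" in a suitable sense, and one shows such a cocycle is necessarily of the form $h_n(x)$ evaluated symmetrically, i.e. a coboundary. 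Concretely, setting $x=0$ or $y=0$ in the cocycle equation extracts candidate values for $h$; for instance putting $z=0$ gives $g(x,y)+g(\ch(x,y),0)-g(x,y)-g(y,0)=0$ after noting $\ch(\cdot,0)=\cdot$, which constrains the "boundary values" $g(\cdot,0)$ and $g(0,\cdot)$, and symmetrizing/antisymmetrizing these produces $h$.

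The main obstacle I expect is the vanishing of the relevant cohomology group: one must argue that every $2$-cocycle $g$ for this Campbell-Hausdorff cosimplicial structure on $\bigoplus_n \Tr_n^{quad}$ is a coboundary, i.e. that $H^2=0$ in the appropriate degree range. This is where the real content lies; the formal cocycle manipulation at the start is routine. A clean route is to use the fact (implicit in the Drinfeld-type setup and in \cite{AT}) that this complex computes a Lie-algebra or Hochschild-type cohomology of $\Lie_n$ which is concentrated in low degrees, so that $H^{\geq 2}$ vanishes. Alternatively, one can give a hands-on proof: define $h(x)$ by $h(x):=g(x,0)$ (or a symmetrized variant thereof), and verify directly, using the cocycle identity and the properties $g(x,0)=g(0,x)$ coming from $\tr^{quad}$-symmetry under $\tau$, that $g(x,y)-h(x)-h(y)+h(\ch(x,y))$ satisfies a cocycle equation with vanishing boundary values, hence is identically zero by an inductive degree count. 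I would try the explicit construction first, since it keeps everything inside the already-developed $\tr^{quad}$ formalism and avoids invoking external cohomology vanishing results.
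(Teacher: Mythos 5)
Your first step reproduces the paper's: by linearity of $\div^{quad}$ and its transformation under simplicial maps, Propositions \ref{last} and \ref{U} give the cocycle identity $g(x,y)+g(\ch(x,y),z)-g(x,\ch(y,z))-g(y,z)=0$. The gap is in the second step, which is where all the content of the proposition lies: you do not actually prove that a solution of this equation is a coboundary, and both routes you sketch fail. The hands-on route via $h(x):=g(x,0)$ cannot work: setting $z=0$ and then $y=0$ in the cocycle identity gives $g(\ch(x,y),0)=g(y,0)$ and hence $g(x,0)=g(0,0)=0$ (and similarly $g(0,y)=0$), so the boundary values of $g$ vanish identically --- consistently with the fact that any coboundary $h(x)+h(y)-h(\ch(x,y))$ also has vanishing boundary values --- and therefore carry no information about $h$; your candidate $h$ would be $0$, whereas the correct $h$ is $\tr^{quad}f(x)\neq 0$. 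The alternative appeal to a general vanishing $H^{\geq 2}=0$ for a ``Campbell--Hausdorff cosimplicial complex'' is not something you can cite: establishing exactly this vanishing, in the relevant bidegree and specifically for $\Tr^{quad}$, is the point of the proof, and it genuinely uses the quadratic trace (for instance, the odd-degree components of a cocycle die only because $\tr^{quad}(x^n)=\tau$-invariance forces $\tr^{quad}(x^n)=0$ for $n$ odd).

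What you would need to supply is the paper's computation. First linearize: replace $\ch(x,y)$ by $x+y$ and solve the resulting homogeneous equation degree by degree. Substituting $x\mapsto sx$ (resp.\ $z\mapsto sz$) and differentiating at $s=0$ expresses, for $g$ homogeneous of degree $n$, the quantity $ng(x,z)$ in terms of boundary-type data and forces $g(x,z)=\tr^{quad}((\alpha x+\beta z)(x+z)^{n-1}-\alpha x^n-\beta z^n)$. Odd $n$ vanishes by $\tau$-invariance of $\tr^{quad}$; for even $n\neq 2$ one substitutes back into the cocycle equation and reads off the coefficient of $\tr^{quad}(y^{n-2}(xz+zx))$, namely $(\beta-\alpha)(n-2)$, which forces $\alpha=\beta$ and hence the coboundary form $h(x)+h(z)-h(x+z)$; the case $n=2$ is checked separately. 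Finally, one passes from the linearized equation back to the actual one by induction on degree: the lowest-degree homogeneous part of $g$ satisfies the linearized equation, so one subtracts the corresponding $\ch$-coboundary $h_n(x)+h_n(y)-h_n(\ch(x,y))$ and raises the starting degree by one. None of this is formal, so as it stands the proposal establishes only the (routine) first displayed identity.
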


The main part of the proof is borrowed from \cite{AT}, Theorem 2.1. We reproduce
it here for convenience of the reader.

\begin{proof}
Propositions \ref{last} and \ref{U} imply that  for $U\in \tder_3$ 
defined by equation \eqref{eq:U} we have $\div^{quad}(U)=0$. That is,
\begin{equation}
\begin{array}{lll}  \label{inhomo}
0 = \div^{quad}(U) & = & 
\div^{quad}(u^{1,2})+ \div^{quad}(u^{\tilde{12},3}) 
- \div^{quad}(u^{1,\tilde{23}})- \div^{quad}(u^{2,3}) \\
& = & g(x,y) + g(\ch(x,y),z) - g(x,\ch(y,z))-g(y,z) .
\end{array}
\end{equation}

Consider an auxiliary equation
with the Campbell-Hausdorff series $\ch(x,y)$ replaced by $x+y$.
In more detail, we look for all $g(x,y) \in \tr^{quad}_2$ which verify
\begin{equation} \label{homo}
g(x,y) + g(x+y,z) - g(x,y+z)-g(y,z)=0.
\end{equation} 
In order to solve this equation, we first put $x \mapsto sx, y \mapsto x, z \mapsto z$
to get
$$
g(sx,x)+g((1+s)x,z)-g(sx,x+z)-g(x,z)=0.
$$
Similarly, by putting $x\mapsto x, y \mapsto z, z \mapsto sz$ we obtain
$$
g(x,z)+g(x+z,sz)-g(x,(1+s)z)-g(z,sz)=0.
$$
Since equation \eqref{homo} preserves the degree, we can assume without
loss of generality that $g$ is homogeneous of degree $n$. Then, we have
$$
\begin{array}{lll}
n g(x,z) & = & 
\frac{d}{ds} \, (g((1+s)x,z) + g(x,(1+s)z)) |_{s=0} \\
& = & \frac{d}{ds} \, (g(sx,x+z)+g(x+z,sz)-g(sx,x)-g(z,sz))|_{s=0} .
\end{array}
$$
That is, we get 
$g(x,z)=\tr^{quad}((\alpha x + \beta z)(x+z)^{n-1} - \alpha x^n - \beta z^n)$.
This expression vanishes for $n$ odd because $\tr^{quad}(x^n)=\tr^{quad}(\tau(x^n))=
(-1)^n \tr^{quad}(x^n)$ (and by a similar argument for other terms).
For $n$ even, we compute
$$
\begin{array}{ll}
& g(x,y) + g(x+y,z) - g(x,y+z)-g(y,z) \\
= & (\beta - \alpha)\tr^{quad} y((x+y)^{n-1}+(y+z)^{n-1}-(x+y+z)^{n-1}-y^{n-1}).
\end{array}
$$
In particular, the coefficient in front of $\tr^{quad}(y^{n-2}(xz+zx))$ 
is equal to $(\beta-\alpha)(n-2)$ which implies $\beta=\alpha$ for $n\neq 2$.
That is, for $n\neq 2$ we get
$$
g(x,z)=\alpha \tr^{quad}((x+z)^n-x^n-z^n).
$$
Furthermore, for $n=2$ we obtain
$$
g(x,z)=(\alpha+\beta) \tr^{quad}(xz)= \frac{\alpha+\beta}{2}\, \tr^{quad}((x+z)^2-x^2-z^2).
$$
In summary, all solutions of equation \eqref{homo} are of the form
$g(x,z)=h(x)+h(z)-h(x+z)$ for $h \in \tr^{quad}_1$.

Getting back to equation \eqref{inhomo}, let $g=\sum_{k \geq n} g_k$ be a solution,
where $g_k$ are homogeneous components of degree $k$. By taking the degree $n$ part
of equation \eqref{inhomo} we recover equation \eqref{homo} for $g_n$. Hence,
there is $h_n \in \tr^{quad}_1$ such that $g_n(x,y)=h_n(x)+h_n(y)-h_n(x+y)$.
Consider $\tilde{g}(x,y)=g(x,y)-(h_n(x)+h_n(y)-h_n(\ch(x,y))$. It is easy to see
that $\tilde{g}$ still verifies equation \eqref{inhomo}, and that it starts in degree
$n+1$. Proceeding by induction, we show that all solutions of equation \eqref{inhomo}
are of the form $g(x,y)=h(x)+h(y)-h(\ch(x,y))$ for some $h\in \tr_1^{quad}$, 
as required.
\end{proof}

We conclude the proof of the main result of this paper with the following
Proposition.

\begin{proposition}
Let $A,B \in \Lie_2$ be a solution of \eqref{kv1}, 
$u\in \tder_2$ be the corresponding
tangential derivation, and assume $\div^{quad}(u)=h(x)+h(y)-h(\ch(x,y))$ for 
some $h \in \tr_1^{quad}$. Then, $h(x)=\tr^{quad} f(x)$ for $f=x/(e^x-1)-1+x/2$.
\end{proposition}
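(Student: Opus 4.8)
The plan is to compute $\div^{quad}(u)$ outright by comparing $u$ with the tangential derivation attached to a solution of the \emph{full} KV problem, and then to read off $h$ from the hypothesis. Recall that by \cite{AM} (or \cite{AT}) there is a pair $(A_0,B_0)\in\Lie_2$ solving both \eqref{kv1} and \eqref{kv2}; let $u_0\in\tder_2$ be the corresponding tangential derivation. Projecting \eqref{kv2} from $\Tr_2$ onto $\Tr_2^{quad}$, and using $\tr^{quad}(x)=\tr^{quad}(y)=0$ so that the possible linear terms of $A_0,B_0$ are irrelevant to $\div^{quad}$, one gets
$$
\div^{quad}(u_0)=\tr^{quad}\!\big(x(\partial_x A_0)+y(\partial_y B_0)\big)=\tfrac12\,\tr^{quad}\!\big(f(x)+f(y)-f(\ch(x,y))\big).
$$

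Next I would show that $\div^{quad}(u)=\div^{quad}(u_0)$ for \emph{every} solution $(A,B)$ of \eqref{kv1}. By the second characterization in Proposition \ref{prop:kv1}, both $u$ and $u_0$ satisfy $u_t(\ch_t(x,y))=\tfrac{d}{dt}\ch_t(x,y)$ for all $t\in\k^*$; at $t=1$ this reads $u(\ch(x,y))=u_0(\ch(x,y))=\tfrac{d}{dt}\ch_t(x,y)|_{t=1}$. Hence the tangential derivation $w:=u-u_0$, which corresponds to the pair $(A-A_0,B-B_0)$, kills $\ch(x,y)$, so $\div^{quad}(w)=0$ by Proposition \ref{last} with $n=2$. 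Since $\div^{quad}$ is $\k$-linear in the derivation (it is the linear map $u'\mapsto\sum_i\tr^{quad}(x_i(\partial_i a_i'))$, and the $\k x_i$-ambiguity in the $a_i'$ dies under $\tr^{quad}$), this gives $\div^{quad}(u)=\div^{quad}(u_0)+\div^{quad}(w)=\div^{quad}(u_0)$.

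Comparing with the hypothesis $\div^{quad}(u)=h(x)+h(y)-h(\ch(x,y))$, the difference $k:=h-\tfrac12\tr^{quad}f\in\tr_1^{quad}$ then satisfies $k(x)+k(y)-k(\ch(x,y))=0$, and it remains to conclude $k=0$. For this I would write $k=\sum_{n\geq 2,\ n\ \mathrm{even}}c_n\tr^{quad}(x^n)$ and pass to the lowest degree $n_0$ with $c_{n_0}\neq 0$; in that degree $\ch(x,y)$ may be replaced by $x+y$, and the coefficient of the nonzero cyclic word $\tr^{quad}(x^{n_0-1}y)$ in $c_{n_0}\big(\tr^{quad}(x^{n_0})+\tr^{quad}(y^{n_0})-\tr^{quad}((x+y)^{n_0})\big)$ equals $-n_0 c_{n_0}$, forcing $c_{n_0}=0$ — a contradiction. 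Hence $k=0$ and $h(x)=\tfrac12\tr^{quad}f(x)$; combined with the preceding proposition this is precisely equation \eqref{kvquad2}, and therefore Theorem \ref{THM}.

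The main obstacle is not technical but structural: this clean proof uses, in the first step, that the full KV problem is already known to be solvable \cite{AM,AT}. An argument staying strictly within \eqref{kv1} would instead require computing $\div^{quad}$ for one concrete solution of \eqref{kv1} — for instance the one read off from the Dynkin presentation of $\ch(x,y)$ — which is elementary but lengthier; the middle step above then guarantees that the answer is independent of the chosen solution. Apart from that, the identity $w(\ch)=0$ together with Proposition \ref{last} does all the work, and the concluding degree computation is routine.
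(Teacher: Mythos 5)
Your argument is correct, but it follows a genuinely different route from the paper. The paper's proof is a self-contained low-order computation: it expands $A(x,y)=ax+\alpha(\ad_x)y+\dots$, $B(x,y)=bx+\beta(\ad_x)y+\dots$, extracts $\alpha(t)$ and $\beta_{odd}(t)$ from the first and second $s$-derivatives of \eqref{kv1} under $y\mapsto sy$, and then matches the part of $\div^{quad}(u)$ linear in $y$ against the linear-in-$y$ part of $h(x)+h(y)-h(\ch(x,y))$ to pin down $h$ via the relation $\beta_{odd}-\alpha_{odd}=-\tfrac12 f'$. You instead import the existence of a solution $(A_0,B_0)$ of the \emph{full} KV problem from \cite{AM,AT}, use Proposition \ref{prop:kv1} and Proposition \ref{last} (with $n=2$) to show $\div^{quad}$ is constant on the solution set of \eqref{kv1}, and finish by showing the homogeneous equation $k(x)+k(y)-k(\ch(x,y))=0$ has only the trivial solution in $\tr_1^{quad}$; each of these steps is sound (in particular your check that $\tr^{quad}(x^{n_0-1}y)\neq 0$ for $n_0$ even is the right point to verify). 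What your route buys is brevity and a clean structural statement (the ``rigidity'' of $\div^{quad}$ across solutions of \eqref{kv1}); what it costs is exactly what the paper is trying to avoid: it makes the quadratic KV problem depend logically on the deep existence theorems for the full KV problem, whereas the paper's computation keeps Theorem \ref{THM} elementary and self-contained. You flag this yourself, and your proposed fix --- compute $\div^{quad}$ for one explicit solution of \eqref{kv1}, e.g.\ the Dynkin one --- is essentially what the paper's low-order computation accomplishes in disguise. One last detail worth recording: your conclusion is $h=\tfrac12\,\tr^{quad}f$, which is the normalization forced by \eqref{kvquad2}; the proposition's displayed conclusion $h=\tr^{quad}f$ is off by this factor of $\tfrac12$ (the paper's own proof determines $f$ inside the ansatz $\tfrac12\,\tr^{quad}(f(x)+f(y)-f(\ch(x,y)))$), so your value is the consistent one.
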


In the proof, we follow the ideas \cite{AP} (see Remark 4.3). The main part
of the argument is the same as in the proof of Proposition 6.1 in \cite{AT}.
We reproduce it for convenience of the reader.

\begin{proof}
Write $A(x,y)=ax+\alpha(\ad_x) y + \dots, B(x,y)=bx + \beta(\ad_x)y + \dots$,
where $a,b \in \k, \alpha, \beta \in \k[[x]]$, and $\dots$ stand for 
terms containing at least two $y$'s.
Replace $y \mapsto sy$ in equation \eqref{kv1}, and compute the 
first and second derivatives in $s$ at $s=0$. The first derivative yields
$$
y - \, \frac{\ad_x}{e^{\ad_x} -1 } \, y = (1-e^{-\ad_x}) \alpha(\ad_x) y
- b [x,y],
$$
and we obtain
$$
\alpha(t)=b \, \frac{t}{1-e^{-t}} \, - \, \frac{t}{(e^t-1)(1-e^{-t})} + \, \frac{1}{1-e^{-t}} .
$$
Note that elements of degree two in $y$ of $\Lie_2$ are 
in bijection with skew-symmetric formal power series in two variables,
$$ 
a(u,v)=\sum_{i,j=0}^\infty a_{i,j} u^i v^j \mapsto
\sum_{i,j=0}^\infty a_{i,j} [\ad_x^i y, \ad_x^j y]
$$
The second derivative of \eqref{kv1} gives the following equality 
in formal power series,
$$
\frac{1}{2} \, \frac{(u+v)(e^u-e^v) - (u-v)(e^{u+v}-1)}{(e^{u+v}-1)(e^u-1)(e^v-1)} =
(1-e^{-(u+v)}) a_2(u,v) + \frac{b}{2} (u-v) + (\beta(v)-\beta(u)),
$$
where the left hand side corresponds to the second derivative of the
Campbell-Hausdorff series $-\ch(sy,x)$, and
$a_2(u,v)$ represents the second derivative of $A(x,sy)$. By putting
$v=-u$ in the last equation we obtain,
$$
\beta_{odd}(t)= \frac{b}{2} \, t - \frac{1}{2} \, \frac{t}{(e^t-1)(1-e^{-t})}
+ \frac{1}{4} \, \frac{e^t+1}{e^t-1} \, .
$$
Here $\beta_{odd}(t)=(\beta(t)-\beta(-t))/2$.

Finally, consider equation
$$
\tr^{quad}(x (\partial_x A) + y (\partial_y B)) = \frac{1}{2}\, 
\tr^{quad}(f(x)+f(y) - f(\ch(x,y)) ),
$$
and compute the contribution linear in $y$ (that is, of the form  
$\tr^{quad}(a(x) y)$) on the left hand side and on the right hand side.
Note that $\tr^{quad}(x^n)$ and $\tr^{quad}(x^{n-1}y)$ vanish for $n$ odd.
Hence, it is sufficient to look at even degrees. In this way, we obtain
$$
\beta_{odd}(t)-\alpha_{odd}(t)=-\frac{1}{2} \, \frac{df}{dt}
$$
which implies
$$
f(t)= \frac{t}{e^t-1} -1 + \frac{t}{2},
$$
as required.

\end{proof}

\vskip 0.2cm

{\bf Acknowledgements.} We are grateful to M. Duflo,
M. Kashiwara, E. Meinrenken,
P. Severa, M. Vergne and T. Willwacher for useful discussions.
Research of A.A. was supported in part by the grants of the 
Swiss National Science Foundation number 200020-120042 and
number 200020-121675. Research of C.T. was supported by CNRS.

\end{document}